\newtheorem{theorem}{Theorem}[section]
\newtheorem{lemma}[theorem]{Lemma}
\newtheorem{definition}[theorem]{Definition}
\newtheorem{proposition}[theorem]{Proposition}
\newtheorem{corollary}[theorem]{Corollary}
\newtheorem{remark}[theorem]{Remark}
\renewenvironment{proof}[1][Proof]{%
  \noindent\textit{#1. }%
}{\hfill$\square$\vspace{4pt}}
\begin{document}
\title{Cauchy problem for a Schrödinger-type equation related to the Riemann zeta function} %%%%%%%%%%%%
	\author{Mohamed Bensaid\\Univ. Lille, CNRS, Inria, UMR 8524 - Laboratoire Paul Painlevé, F-59000 Lille, France\\ \url{mohamed.bensaid@univ-lille.fr}}
	%\date{\today}
\maketitle
\begin{abstract}
We study the Cauchy problem in the space $H^1(\Sigma)$ for a nonlinear damped Schrödinger equation of the form
\begin{equation}\tag{NLS-$\zeta$}\label{nls}
    i u_t + \Delta u + i \lambda u \, \zeta(|u|+1) = 0, \quad u(0,x) = u_0,
\end{equation}
where $\zeta$ denotes the Riemann zeta function. We first establish the uniqueness of solutions in the sense of distributions. Then, by considering a regularized problem, we prove the existence of a global solution in $H^1(\Sigma)$, using uniform estimates and compactness arguments. Finally, we show that the limiting solution indeed satisfies the original equation in the weak sense. In the addition we proof that, the one-dimensional case, we show that it becomes zero in finite time.
\end{abstract}

\section{Introduction}
We consider the Schr\"odinger equation with a homogeneous damping term related to the famous Riemann zeta function:
\begin{equation}\tag{NLS-$\zeta$}\label{nlss}
    i u_t + \Delta u + i \lambda u \, \zeta(|u|+1) = 0, \quad u(0,x) = u_0,
\end{equation}
where the Riemann zeta function is initially defined for $\Re(s) > 1$ by
\[
\zeta(s) = \sum_{n=1}^\infty \frac{1}{n^s}.
\]

Furthermore, we consider a smooth, compact, finite-dimensional Riemannian manifold $\Sigma$ without boundary. On $\mathbb{R}^d$, typically only on a compact manifold does 
$u \, \zeta(|u|+1)$ belong to $L^p_x(\mathbb{R}^d)$ for finite $p$, 
so the nonlinear term is delicate to control in the $\mathbb{R}^d$ case.

As $x \to 0$, we know the asymptotic behavior
\[
\zeta(x+1) \sim \frac{1}{x}.
\]

In this context, the Cauchy theory in $H^{1}(\Sigma)$ for the equation
\begin{equation}\tag{E0}\label{E0}
i v_t + \Delta v + i\lambda \frac{v}{|v|} = 0, 
\qquad v(0,x) = v_0,
\end{equation}
was established in \cite{3} by R\'emi Carles and Cl\'ement Gallo. They also proved that the solution becomes zero for $d \in \{1,2,3\}$ under suitable regularity conditions.

Inspired by \cite{3} and \cite{1}, we establish the Cauchy theory for the $(\text{NLS-}\zeta)$ equation. In addition, we provide an explicit estimate for the mass of solutions, and we show that finite-time extinction occurs under the conditions for the dimension $d$.

In the final section, we also study the effect of combining the nonlinearity with a logarithmic term.

\subsection{Main result}
Before stating our main result, we need to specify the notion of weak solution. Recall that for $\Re(s) > 0$, we have
\[
\zeta(s+1) = \frac{1}{s} + \psi(s+1),
\]
for some holomorphic function $s \mapsto \psi(s)$. Hence, in \eqref{nls}, the nonlinearity has the form
\[
i \lambda u \, \zeta(|u| + 1) = i\lambda\big(\frac{u}{|u|} + u \, \psi(|u| + 1)\big).
\]
However, this expression is not defined when $u = 0$. Therefore, it is impossible to consider $\frac{u}{|u|}$ as a function everywhere.

\begin{definition}
    We say that $u$ is weak solution to \eqref{nls} if
\[
u \in \mathcal{C}(\mathbb{R}_+; L^2(\Sigma)) \cap L^{\infty}(\mathbb{R}_+; H^1(\Sigma))
\]
satisfying
\[
i \frac{\partial u}{\partial t} + \Delta u + i \lambda F=0
\quad \text{in } \mathcal{D}'(\mathbb{R}_+^* \times \Sigma),
\]
where the function $F$ satisfies
\[
\|F\|_{L^{\infty}(\mathbb{R}_+ \times \Sigma)} \le C, \quad\text{ and }\quad F=\dfrac{u}{|u|} \quad\text{if } u \neq 0.
\]
for some $C>0$
\end{definition}
\begin{theorem}\label{1.1}
    Let $d\in \mathbb{N}^*$, for $\lambda>0$ 
for any initial data $u_0 \in H^1(\Sigma)$, there exists a unique global weak solution
\[
u \in L^{\infty}(\mathbb{R}_+,H^1(\Sigma))\cap \mathcal{C}(\mathbb{R}_+,L^2(\Sigma)).
\]
for \eqref{nls}.
In the addition $$\norm{u(t)}_2\leq \norm{u_0}_2e^{-\lambda t} \quad \text{ and }\quad \norm{\nabla u(t)}_2\leq \norm{\nabla u_0}_2.$$
Morevore if $d=1$, then there exists $T > 0$ such that the (unique) weak
solution to \eqref{nls} satisfies
\[
u(t,x) = 0, \quad \text{for almost every } x \in \Sigma, \quad \text{for every } t > T.
\]
\end{theorem}
\begin{proposition}\label{pro}
   Let $u,v \in L^\infty(\mathbb{R}_+;H^1(\Sigma))$ be two (distributional) solutions of \eqref{nls} with initial data $u_0,v_0$ given by previos theorem. then for all $t\geq s$, $$\norm{u(t)-v(t)}_2\leq e^{-c(t-s)}\norm{u(s)-v(s)}_2.$$
   for some $c:=c(\lambda)>0$.
\end{proposition}
\begin{corollary}[Continuity of the flow]\label{flow}
     Let $T>0$ and $u_{n,0},\, u_0 \in H^1(\Sigma)$, and let $u_n(t)$ and $u(t)$ be the solutions corresponding to the initial data $u_{n,0}$ and $u_0$, respectively for \eqref{nls}. Suppose that
\[
u_{n,0} \longrightarrow u_0 \quad \text{in } L^2(\mathbb{R}^d),
\] 
and that there exists a constant $K>0$ such that
\[
\sup_{t \in [0,T]} \|u_n(t)\|_{H^1} \le K.
\] 
Then, for all $t \in [0,T]$, we have (up to subsequence)
\[
u_n(t) \rightharpoonup u(t) \quad \text{ weakly in } H^1(\Sigma) \qquad \text{ and }\qquad u_n(t) \longrightarrow u(t) \quad \text{ strongly in } L^2(\Sigma)
\]
\end{corollary}
The extension of the Finite time extinction property of solutions to higher dimensions 
remains an open and challenging question. 
The main difficulty lies in controlling the relevant norms and obtaining suitable inequalities 
in higher-dimensional spaces, which are essential for applying the extinction argument. 
In \cite{3}, additional regularity is used to deduce finite-time extinction for $d=2,3$. 
This idea could also be extended to equation~\eqref{nls},
however, in the present case, we know more precisely that the mass decreases exponentially, 
in contrast to the situation studied in \cite{3}.

%As a corollary to our approach, we can prove the cauchy probleme is golbaly well possednes the phenomene to exticntion in the finit time the notion of weak solution is easily adapted 
\begin{lemma}\label{coro}
        Let $d=1$, for $\lambda>0$ 
for any initial data $u_0 \in H^1(\Sigma)$, there exists a unique global solution
\[
u \in L^{\infty}(\mathbb{R}_+,H^1(\Sigma))\cap \mathcal{C}(\mathbb{R}_+,L^2(\Sigma)).
\]
satisfait \begin{equation}\tag{logNLS-$\zeta$}\label{Elog}
\begin{cases}
i u_t + \Delta u + i\lambda\, u\,\zeta(|u|+1) + \mu\, u\,\log(|u|) = 0, & (t,x)\in \mathbb{R}_+ \times \Sigma,\\[0.3em]
u(0,x) = u_0(x)\in H^1(\Sigma),
\end{cases}
\end{equation}
In the addition $$\norm{u(t)}_2\leq \norm{u_0}_2e^{-\lambda t} \quad \text{ and }\quad \norm{\nabla u(t)}_2\leq \norm{\nabla u_0}_2.$$
Morevore, there exists $T > 0$ such that the (unique) weak
solution to \eqref{Elog} satisfies
\[
u(t,x) = 0, \quad \text{for almost every } x \in \Sigma, \quad \text{for every } t > T.
\]

\end{lemma}

\begin{remark}
\begin{enumerate}
    \item
    The Cauchy problem can also be extended to higher spatial dimensions $ d $.
    However, the solutions obtained belong to
    $
    L^\infty_{\mathrm{loc}}(\mathbb{R}_+; H^1(\Sigma))
    \;\cap\;
    \mathcal{C}(\mathbb{R}_+; L^2(\Sigma)).
    $

    \item
    Corollary~\ref{flow} also holds for equation~\eqref{Elog}.
\end{enumerate}
\end{remark}

\section{Construction in \texorpdfstring{$H^1(\Sigma)$}{H1(Sigma)}}
Let $ (\Sigma, g) $ be a smooth compact Riemannian surface without boundary, and let $ \lambda > 0 $.
The construction of solutions in $ H^1(\Sigma) $ relies on a compactness argument.
First, we regularize equation~\eqref{nls}.

\begin{equation}\tag{NLS-$\zeta_\epsilon$}\label{Eeps}
\begin{cases}
i \partial_t u_\varepsilon + \Delta u_\varepsilon + i \lambda \, u_\varepsilon \, \zeta(|u_\varepsilon|+1+\varepsilon) = 0,\\
u_\varepsilon(0) = u_0.
\end{cases}
\end{equation}
\subsection{Construction}
For a fixed $\varepsilon > 0$, the equation \eqref{Eeps} has a unique maximal solution \[
u_\varepsilon \in \mathcal{C}([0,T_{\rm max}^{\varepsilon}); H^1(\Sigma)) \cap \mathcal{C}^1([0,T_{\rm max}^{\varepsilon}); H^{-1}(\Sigma)).
\] (see Chapter 3 of \cite{2}),

The estimates below show that $T_{\rm max}^{\varepsilon} = +\infty$ and provide uniform bounds with respect to $\varepsilon$.

\begin{lemma}\label{mass}
    For all $t\in [0,T_{\rm max}^{\varepsilon}[,$ we have $$\norm{u_\varepsilon(t)}_2\leq \norm{u_0}_2e^{-\lambda t}.$$ 
\end{lemma}
\begin{proof}
    Multiplying \eqref{Eeps} by $\overline{u_\varepsilon}$, integrating over $\Sigma$, and taking the real part yields
\[
\frac{1}{2} \frac{d}{dt} \|u_\varepsilon(t)\|_{L^2(\Sigma)}^2
= -\lambda \int_\Sigma |u_\varepsilon(t)|^2 \, \zeta(|u_\varepsilon(t)| + 1 + \varepsilon) \, dx.
\]
Under the assumption $\lambda>0$ and $\zeta(\cdot)\ge 1$, we obtain in particular
\begin{equation}\label{mass_bound}
\partial_t\|u_\varepsilon(t)\|_{L^2(\Sigma)}^2 \le -2\lambda\|u_\varepsilon(t)\|_{L^2(\Sigma)}^2,
\end{equation}
Hence $$\norm{u_\varepsilon(t)}_2\leq e^{-\lambda t}\norm{u_0}_2.$$
Thus, the mass is uniformly bounded in $\varepsilon$.
\end{proof}

\begin{lemma}\label{3}
For all $t\in [0,T_{\rm max}^{\varepsilon}[$, we have
\begin{equation}\label{grad_bound}
\|\nabla u_\varepsilon(t)\|_{L^2(\Sigma)} \le \|\nabla u_0\|_{L^2(\Sigma)}.
\end{equation}
\end{lemma}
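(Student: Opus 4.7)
The natural route is to obtain an energy--type estimate by pairing \eqref{Eeps} with $-\Delta \overline{u_\varepsilon}$. After a preliminary density argument (approximating $u_0$ by $H^2$ data and invoking continuous dependence for the locally Lipschitz problem \eqref{Eeps}), I would multiply \eqref{Eeps} by $-\Delta\overline{u_\varepsilon}$, integrate over $\Sigma$, and take the imaginary part. The pure second--order contribution $-\int_\Sigma |\Delta u_\varepsilon|^2\,dx$ is real and therefore drops out, whereas the time--derivative term, after one integration by parts in space, produces $\tfrac12\tfrac{d}{dt}\|\nabla u_\varepsilon\|^2_{L^2(\Sigma)}$.

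For the nonlinear piece I would integrate by parts and apply the chain rule $\nabla[\zeta(|u_\varepsilon|+1+\varepsilon)] = \zeta'(|u_\varepsilon|+1+\varepsilon)\nabla|u_\varepsilon|$; combined with the pointwise identity $\Re(u_\varepsilon\nabla\overline{u_\varepsilon})=|u_\varepsilon|\nabla|u_\varepsilon|$ this leads to the differential identity
\[
\tfrac12\tfrac{d}{dt}\|\nabla u_\varepsilon\|^2_{L^2} = -\lambda \int_\Sigma \zeta(|u_\varepsilon|+1+\varepsilon)\,|\nabla u_\varepsilon|^2\,dx - \lambda\int_\Sigma |u_\varepsilon|\,\zeta'(|u_\varepsilon|+1+\varepsilon)\,|\nabla|u_\varepsilon||^2\,dx.
\]
The first term on the right is manifestly $\le 0$, so the whole issue is the second one.

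The main obstacle is that $\zeta'<0$ on $(1,+\infty)$, so the $\zeta'$--term carries the wrong sign. I would close the estimate with two ingredients: the Kato inequality $|\nabla|u_\varepsilon||\le|\nabla u_\varepsilon|$ a.e., and the very monotonicity property of $x\mapsto x\zeta(x+1)$ already exploited in Lemma~\ref{lem:monotone}, namely $\zeta(y)+(y-1)\zeta'(y)\ge 0$ for $y>1$. Evaluating this inequality at $y=|u_\varepsilon|+1+\varepsilon$ and using $\zeta'(y)\le 0$ to absorb the $\varepsilon$--shift yields the pointwise bound
\[
\zeta(|u_\varepsilon|+1+\varepsilon)\ \ge\ |u_\varepsilon|\,|\zeta'(|u_\varepsilon|+1+\varepsilon)|.
\]
Combining this with Kato's inequality shows that the integrand $\zeta\,|\nabla u_\varepsilon|^2 + |u_\varepsilon|\zeta'\,|\nabla|u_\varepsilon||^2$ is nonnegative pointwise. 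Hence $\tfrac{d}{dt}\|\nabla u_\varepsilon(t)\|^2_{L^2}\le 0$, and integrating from $0$ to $t$ delivers \eqref{grad_bound}.
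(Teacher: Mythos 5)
Your proposal is correct and follows essentially the same route as the paper: the same energy identity obtained by pairing the equation with $-\Delta\overline{u_\varepsilon}$, followed by the same two ingredients, namely the monotonicity $\zeta(x+1+\varepsilon)+x\,\zeta'(x+1+\varepsilon)\ge 0$ (the paper's Lemma~\ref{cr}) and the inequality $|\nabla|u_\varepsilon||\le|\nabla u_\varepsilon|$. The only differences are cosmetic — you write the $\zeta'$-term as $|u_\varepsilon|\zeta'|\nabla|u_\varepsilon||^2$ where the paper keeps $\Re\frac{u_\varepsilon\nabla|u_\varepsilon|\cdot\overline{\nabla u_\varepsilon}}{|u_\varepsilon|}$ (the same quantity), and you add a density step justifying the formal computation, which the paper omits.
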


\begin{proof}We have
\[
\frac{1}{2} \frac{d}{dt} \|\nabla u_\varepsilon(t)\|_{L^2(\Sigma)}^2
= \Re \langle \partial_t u_\varepsilon(t), -\Delta u_\varepsilon(t) \rangle.
\]
Replacing $\partial_t u_\varepsilon = -i \Delta u_\varepsilon - \lambda u_\varepsilon \zeta(|u_\varepsilon|+1+\varepsilon)$, the term involving $\Delta$ is purely imaginary and vanishes, giving
\[
\frac{1}{2} \frac{d}{dt} \|\nabla u_\varepsilon(t)\|_{L^2(\Sigma)}^2
= -\lambda \, \Re \langle u_\varepsilon(t) \zeta(|u_\varepsilon(t)|+1+\varepsilon), \Delta u_\varepsilon(t) \rangle.
\]
By integration by parts,
\[
\Re \langle u_\varepsilon(t) \zeta(|u_\varepsilon(t)|+1+\varepsilon), \Delta u_\varepsilon(t) \rangle
= - \Re\int_\Sigma \nabla \big( u_\varepsilon(t) \zeta(|u_\varepsilon(t)|+1+\varepsilon) \big) \cdot \overline{\nabla u_\varepsilon(t)} \, dx.
\]
Expanding the gradient and taking the real part, we obtain
\begin{equation}\label{10}
    \frac{1}{2} \frac{d}{dt} \|\nabla u_\varepsilon(t)\|_{L^2(\Sigma)}^2
= -\lambda \int_\Sigma \zeta(|u_\varepsilon(t)|+1+\varepsilon) |\nabla u_\varepsilon(t)|^2
- \lambda \int_\Sigma |u_\varepsilon(t)| \zeta'(|u_\varepsilon(t)|+1+\varepsilon) \Re \frac{u_\varepsilon(t) \nabla |u_\varepsilon(t)| \cdot \overline{\nabla u_\varepsilon(t)}}{|u_\varepsilon(t)|} \, dx.
\end{equation}
By Lemma \ref{cr}, the function $f:x \mapsto x \zeta(x+1+\varepsilon)$ is non-decreasing, so $-f'(x) \le 0$. In particular,
\[
-\zeta(|u_\varepsilon|+1+\varepsilon)\leq|u_\varepsilon| \zeta'(|u_\varepsilon|+1+\varepsilon)\leq 0.
\]
In the other hand we have \[
\left| \Re \frac{u_\varepsilon \nabla |u_\varepsilon| \cdot \overline{\nabla u_\varepsilon}}{|u_\varepsilon|} \right| \le |\nabla u_\varepsilon|^2\quad \text{ and } \quad \zeta'(|u_\varepsilon|+1+\varepsilon) \le 0.
\]
Thus,
\[
\frac{1}{2} \frac{d}{dt} \|\nabla u_\varepsilon(t)\|_{L^2(\Sigma)}^2 
\le \lambda \int_\Sigma |u_\varepsilon(t)| \zeta'(|u_\varepsilon(t)|+1+\varepsilon) \left( |\nabla u_\varepsilon(t)|^2 - \Re \frac{ u_\varepsilon(t)\nabla |u_\varepsilon(t)| \cdot \overline{\nabla u_\varepsilon(t)}}{|u_\varepsilon(t)|} \right) dx \le 0.
\]
By the fundamental theorem of calculus, the lemma follows:
\[
\|\nabla u_\varepsilon(t)\|_{L^2(\Sigma)} \le \|\nabla u_0\|_{L^2(\Sigma)}.
\]
\end{proof}
\begin{remark}
The previos lemmas show that
\begin{equation}\label{nt}
    M:=\sup_{\substack{0<\varepsilon<1 \\ t\in \mathbb{R}_+}}\|u_\varepsilon(t)\|_{H^1(\Sigma)} <\infty,
\end{equation}
which allows the solution to be extended globally in time: $T_{\rm max}^{\varepsilon} = +\infty$.
\end{remark}
We now need to prove that the regularized solutions $u_\varepsilon$ converge, up to the extraction
of a subsequence, to some limit function $u$ belonging to the some functional space.
 
\begin{lemma}\label{ona}
Let $T>0$, then up to extraction of a subsequence, the family $(u_\varepsilon)_\varepsilon$ converges in $\mathcal{C}([0,T]; L^2(\Sigma))$, and the limit is denoted by $u$.
\end{lemma}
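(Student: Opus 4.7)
The plan is to combine the uniform $H^1$ bounds \eqref{mass_bound}--\eqref{grad_bound} with a uniform time-regularity estimate on $\partial_t u_\varepsilon$, and then invoke an Ascoli--Arzel\`a argument based on the compactness of the Sobolev embedding $H^1(\Sigma)\hookrightarrow L^2(\Sigma)$, which holds because $\Sigma$ is compact.

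\textbf{Step 1 (time derivative estimate).} Rewriting \eqref{Eeps} as $\partial_t u_\varepsilon = i\Delta u_\varepsilon - \lambda\, u_\varepsilon\zeta(|u_\varepsilon|+1+\varepsilon)$, I would first bound the Laplacian term by $\|\Delta u_\varepsilon\|_{H^{-1}(\Sigma)}\le C\|u_\varepsilon\|_{H^1(\Sigma)}$. For the nonlinear term, I would use that the map $x\mapsto x\zeta(x+1+\varepsilon)$ is bounded on compacts and has at most linear growth, uniformly in $\varepsilon\ge 0$: indeed $x\zeta(x+1)\to 1$ as $x\to 0^+$ (since $\zeta(x+1)\sim 1/x$) and $x\zeta(x+1)\sim x$ as $x\to\infty$, so
\[
|u_\varepsilon\zeta(|u_\varepsilon|+1+\varepsilon)|\le C(1+|u_\varepsilon|),
\]
which yields a uniform bound of $u_\varepsilon\zeta(|u_\varepsilon|+1+\varepsilon)$ in $L^2(\Sigma)\hookrightarrow H^{-1}(\Sigma)$. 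Combining the two estimates, $\partial_t u_\varepsilon$ is uniformly bounded in $L^\infty(0,T;H^{-1}(\Sigma))$.

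\textbf{Step 2 (equicontinuity in $L^2$).} Integrating in time gives $\|u_\varepsilon(t)-u_\varepsilon(s)\|_{H^{-1}(\Sigma)}\le C|t-s|$. Interpolating this with the uniform $H^1$ bound via the Gelfand triple $H^1\subset L^2\subset H^{-1}$,
\[
\|u_\varepsilon(t)-u_\varepsilon(s)\|_{L^2(\Sigma)}\le \|u_\varepsilon(t)-u_\varepsilon(s)\|_{H^{-1}(\Sigma)}^{1/2}\|u_\varepsilon(t)-u_\varepsilon(s)\|_{H^{1}(\Sigma)}^{1/2}\le C|t-s|^{1/2},
\]
so the family $(u_\varepsilon)$ is equi-H\"older continuous in $\mathcal{C}([0,T];L^2(\Sigma))$.

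\textbf{Step 3 (Ascoli--Arzel\`a).} For each fixed $t\in[0,T]$, $(u_\varepsilon(t))_\varepsilon$ is bounded in $H^1(\Sigma)$ and hence precompact in $L^2(\Sigma)$ by Rellich--Kondrachov. Together with the uniform equicontinuity from Step 2, the Ascoli--Arzel\`a theorem for continuous maps into a complete metric space delivers a subsequence converging in $\mathcal{C}([0,T];L^2(\Sigma))$; I denote its limit by $u$.

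\textbf{Main obstacle.} The delicate point is the uniform-in-$\varepsilon$ control of the nonlinearity in Step 1: because $\zeta$ is singular at $1$, one cannot bound $\zeta(|u_\varepsilon|+1+\varepsilon)$ independently of $\varepsilon$ on regions where $u_\varepsilon$ is small. The necessary cancellation is the one already exploited in Lemma \ref{cr}: the multiplicative factor $|u_\varepsilon|$ kills the pole, so $x\zeta(x+1)$ stays bounded as $x\to 0$. This is what makes the whole argument go through uniformly in $\varepsilon$.
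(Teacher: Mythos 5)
Your proof is correct and rests on the same two ingredients as the paper's: the uniform $L^\infty_t H^1_x$ bound from \eqref{mass_bound}--\eqref{grad_bound} and the uniform $L^\infty_t H^{-1}_x$ bound on $\partial_t u_\varepsilon$ obtained from the equation together with the pointwise bound $|x\,\zeta(x+1+\varepsilon)|\le 1+x$. The only difference is the final compactness step: the paper invokes the Aubin--Lions lemma as a black box, whereas you unpack it by interpolating $\|w\|_{L^2}\le\|w\|_{H^{-1}}^{1/2}\|w\|_{H^1}^{1/2}$ to get equi-H\"older continuity in $L^2$ and then applying Arzel\`a--Ascoli with Rellich--Kondrachov for pointwise precompactness. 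Your version is in fact the more careful one here, since the classical Aubin--Lions statement yields compactness in $L^p(0,T;L^2)$ rather than in $\mathcal{C}([0,T];L^2)$; the $\mathcal{C}([0,T];L^2)$ conclusion the lemma asserts is exactly what your interpolation--plus--Ascoli argument (equivalently, Simon's variant of Aubin--Lions) delivers.
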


\begin{proof}
From the previous estimates, the family $\{ u_\varepsilon \}_{0<\varepsilon<1}$ is uniformly bounded in $L^\infty(\mathbb{R}_+; H^1(\Sigma))$. Moreover, from equation \eqref{Eeps},
\[
\partial_t u_\varepsilon = -i \Delta u_\varepsilon - \lambda u_\varepsilon \zeta(|u_\varepsilon|+1+\varepsilon).
\]
The first term belongs to $L^\infty(\mathbb{R}_+; H^{-1}(\Sigma))$ (since $\Delta u_\varepsilon \in H^{-1}$) and the second term is uniformly bounded in $L^\infty(\mathbb{R}_+; L^2(\Sigma))$. Since $L^2(\Sigma) \hookrightarrow H^{-1}(\Sigma)$, we have
\[
\sup_\varepsilon \|\partial_t u_\varepsilon\|_{L^\infty(0,T; H^{-1}(\Sigma))} < \infty.
\]
We then apply the Aubin–Lions lemma: since
\[
H^1(\Sigma) \overset{\text{compact}}{\hookrightarrow} L^2(\Sigma) \overset{\text{continuous}}{\hookrightarrow} H^{-1}(\Sigma),
\]
and $(u_\varepsilon)$ is bounded in $L^\infty(0,T; H^1(\Sigma))$ while $(\partial_t u_\varepsilon)$ is bounded in $L^\infty(0,T; H^{-1}(\Sigma))$, there exists a subsequence (still denoted $u_\varepsilon$) and a function
\[
u \in \mathcal{C}([0,T]; L^2(\Sigma)),
\]
such that
\[
u_\varepsilon \to u \quad \text{in }\quad \mathcal{C}([0,T]; L^2(\Sigma)).
\]
\end{proof}

\begin{corollary}\label{cv}
The function $u$, defined in Lemma \ref{ona}, satisfies:
\begin{enumerate}
    \item $u \in L^{\infty}(\mathbb{R}_+, H^1(\Sigma))$.
    \item $u_\varepsilon(t) \rightharpoonup u(t)$ weakly in $H^1(\Sigma)$, for all $t \in \mathbb{R}_+$.
\end{enumerate}
\end{corollary}

%\begin{proof}
%\begin{enumerate}
%    \item For any $\phi \in \mathcal{C}_c^\infty(\Sigma)$, by integration by parts and the Cauchy–Schwarz inequality:
%    $$
%    \left| \int_\Sigma u_\varepsilon(t) \nabla \phi \, dx \right| \le M \|\phi\|_2.
%    $$
%    Passing to the limit, we deduce:
%    $$
%    \left| \int_\Sigma u \nabla \phi \, dx \right| \le M \|\phi\|_2,
%    $$
%    which shows that $u(t) \in H^1(\Sigma)$ and $\|\nabla u(t)\|_2 \le M$ for all $t \in %\mathbb{R}_+$.
    
%    \item he weak convergence in $H^1(\mathbb{R}^d)$ can be shown directly. Let 
%$\phi \in \mathcal{C}_c^{\infty}(\mathbb{R}^d)$. We have
%$$
%\langle \nabla u_\varepsilon(t), \phi \rangle \underset{\varepsilon \to 0}{\longrightarrow}
%\langle \nabla u(t), \phi \rangle .
%$$
%Thus,
%$$
%\nabla u_\varepsilon(t) \underset{\varepsilon \to 0}{\rightharpoonup}
%\nabla u(t) \quad \text{in } \mathcal{D}'(\mathbb{R}^d).
%$$
%On the other hand, by \eqref{nt}, there exists $v(t) \in L^2(\mathbb{R}^d)$ such that
%\[
%\nabla u_\varepsilon(t) \underset{\varepsilon \to 0}{\rightharpoonup}
%v(t) \quad \text{in } L^2(\mathbb{R}^d).
%\]
%By identification, we obtain $v(t) = \nabla u(t)$.
%\end{enumerate}\qedhere
%\end{proof}
\begin{lemma}\label{cv11}
$$
u_\varepsilon \, \zeta(|u_\varepsilon|+1+\varepsilon) \rightharpoonup u \, \zeta(|u|+1) \quad \text{in} \quad \mathcal{D}'(\mathbb{R}_+^*\times \Sigma).
$$
\end{lemma}

\begin{proof}
Let $T>0$. Then
\[
u_\varepsilon \to u
\quad \text{in } \mathcal{C}([0,T]; L^2(\Sigma)),
\]
which implies that
\[
u_\varepsilon(t,x) \to u(t,x)
\quad \text{for almost every } (t,x) \in (0,T)\times \Sigma.
\]
Therefore, for almost every $ (t,x) \in (0,T) \times \Sigma $ such that
$ u(t,x) \neq 0 $, we have
\[
u_\varepsilon(t,x) \, \zeta(|u_\varepsilon(t,x)| + 1 + \varepsilon)
\longrightarrow
u(t,x) \, \zeta(|u(t,x)| + 1).
\]
Since $(0,T)\times\Sigma$ has finite Lebesgue measure, the convergence holds in measure. On the other hand, we have
\[
\big| u_\varepsilon(t,x)\, \zeta(|u_\varepsilon(t,x)| + 1 + \varepsilon) \big|
\le 1 + |u_\varepsilon(t,x)|
\quad \text{for almost every } (t,x)\in (0,T)\times\Sigma.
\]
Combining this with
\[
\|u_\varepsilon(t)\|_{L^2(\Sigma)} \le \|u_0\|_{L^2(\Sigma)},
\]
we deduce that 
\[
u_\varepsilon(t,x)\, \zeta(|u_\varepsilon(t,x)| + 1 + \varepsilon),
\]
is uniformly integrable in $L^1((0,T)\times\Sigma)$.
By Vitali's theorem, we obtain
\[
u_\varepsilon \, \zeta(|u_\varepsilon| + 1 + \varepsilon)
\longrightarrow
u \, \zeta(|u| + 1)
\quad \text{in } L^1((0,T)\times \Sigma).
\]
and therefore also in $\mathcal D'((0,T)\times \Sigma)$.
\\
Since $T>0$ is arbitrary, this proves that
\[
u_\varepsilon \zeta(|u_\varepsilon| + 1 + \varepsilon) \rightharpoonup u \zeta(|u| + 1) \quad \text{in } \mathcal D'(\mathbb{R}_+^*\times \Sigma).
\]
\end{proof}

\begin{proof}[Proof theorem \ref{1.1}]
Let $\phi \in \mathcal{C}_c^1(\mathbb{R}_+^*)$ and $\psi \in \mathcal{C}_c^\infty(\Sigma)$. Then
\begin{align*}
\int_{\mathbb{R}_+} (i u_\varepsilon(t), \psi)_2 \, \phi'(t) \, dt
= \int_{\mathbb{R}_+} \Big[ - (\nabla u_\varepsilon(t), \nabla \psi)_2
+ \lambda (u_\varepsilon(t) \zeta(|u_\varepsilon(t)|+1+\varepsilon), \psi)_2 \Big] \phi(t) \, dt.
\end{align*}
Passing to the limit and using and Lemma \ref{cv11} and Corollary \ref{cv}, we obtain
\[
\int_{\mathbb{R}_+} (i u(t), \psi)_2 \, \phi'(t) \, dt
= \int_{\mathbb{R}_+} \Big[ - (\nabla u(t), \nabla \psi)_2
+ \lambda (u(t) \zeta(|u(t)|+1), \psi)_2 \Big] \phi(t) \, dt.
\]
\end{proof}

\subsection{Uniqueness}
The uniqueness is a direct consequence of Proposition~\ref{pro} by taking $s = 0$.
In order to prove this proposition, we establish a coercivity property for the function
$z \mapsto z\,\zeta(|z|+1)$.
More precisely, we prove the following lemma.

\begin{lemma}\label{uni}
There exists $c>0$ such that, for all $z,s \in \mathbb{C}^*$,
\[
\Re\big( (f(z)-f(s)) (\overline{z}-\overline{s}) \big) \ge c |z-s|^2.
\]
where $f(z):=z\zeta(\abs{z}+1)$
\end{lemma}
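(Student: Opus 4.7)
The plan is to reduce the complex inequality to a one-variable statement about the function $g(x) := x\zeta(x+1)$, and then to exploit the asymptotic behaviour of $\zeta$ to bound $g'$ from below by a positive constant.

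Setting $a := |z|$, $b := |s|$, and $\rho := \Re(z\overline{s})$, and mimicking the expansion used in the proof of Lemma \ref{lem:monotone}, regrouping the four resulting monomials yields the identity
\[
\Re\bigl((f(z)-f(s))(\overline{z}-\overline{s})\bigr) = (a-b)\bigl(g(a)-g(b)\bigr) + \bigl(\zeta(a+1)+\zeta(b+1)\bigr)(ab-\rho).
\]
Together with the identity $|z-s|^2 = (a-b)^2 + 2(ab-\rho)$, this reduces the target bound to finding a single $c > 0$ such that
\[
g(a)-g(b) \geq c(a-b) \quad (a \geq b > 0) \qquad\text{and}\qquad \zeta(a+1)+\zeta(b+1) \geq 2c,
\]
after which one sums the two nonnegative contributions (noting $ab - \rho \geq 0$ by Cauchy–Schwarz).

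The second inequality is essentially free: $\zeta(r+1) > 1$ for every $r > 0$, so it holds as soon as $c \leq 1$. By the mean value theorem, the first inequality reduces to proving $c_0 := \inf_{x > 0} g'(x) > 0$, and then one takes $c := \min(c_0, 1)$.

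The main obstacle is establishing $c_0 > 0$. The Laurent expansion $\zeta(s) = (s-1)^{-1} + \gamma + O(s-1)$ yields $g(x) = 1 + \gamma x + O(x^2)$ near the origin, hence $g'(x) \to \gamma > 0$ as $x \to 0^+$, while the Dirichlet series gives $g'(x) \to 1$ as $x \to \infty$. Combined with continuity of $g'$ on $(0, \infty)$ and the monotonicity bound $g' \geq 0$ provided by Lemma \ref{cr}, it remains to rule out zeros of $g'$ on each compact subinterval $[\delta, M] \subset (0, \infty)$. I would handle this final step via the logarithmic-derivative identity
\[
g'(x) = \zeta(x+1)\Bigl(1 - x \sum_{n \geq 2} \Lambda(n)\, n^{-(x+1)}\Bigr),
\]
together with quantitative control of the von Mangoldt series for $x > 0$, so as to upgrade $g' \geq 0$ to a strict pointwise bound $g' > 0$. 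Once $c_0 > 0$ is secured, setting $c = \min(c_0, 1)$ concludes the proof.
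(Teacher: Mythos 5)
Your reduction is correct and is genuinely different from the paper's. You expand $\Re\bigl((f(z)-f(s))(\overline z-\overline s)\bigr)$ algebraically into $(a-b)(g(a)-g(b))+(\zeta(a+1)+\zeta(b+1))(ab-\rho)$ with $g(x)=x\zeta(x+1)$, pair it with $|z-s|^2=(a-b)^2+2(ab-\rho)$, and split the bound into two one-variable inequalities. The paper instead writes $f(z)-f(s)=\int_0^1 Df(s+t(z-s))\cdot(z-s)\,dt$ and bounds the integrand below by $\bigl(\zeta(|\delta_t|+1)+|\delta_t|\,\zeta'(|\delta_t|+1)\bigr)|h|^2$. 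Both routes funnel into the \emph{same} scalar estimate, since $\zeta(r+1)+r\zeta'(r+1)=g'(r)$: everything hinges on $\inf_{r>0}g'(r)>0$. Your algebraic identity has the advantage of sidestepping a real defect of the path-integral argument, namely that the segment from $s$ to $z$ may pass through (or near) the origin, where $Df$ blows up like $1/|w|$ and the integrand is not obviously integrable.

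The gap is in your final step. Reducing to ``$g'>0$ on each compact $[\delta,M]$'' and proposing to prove it from the identity $g'(x)=\zeta(x+1)\bigl(1-x\sum_{n\ge 2}\Lambda(n)n^{-(x+1)}\bigr)$ ``together with quantitative control of the von Mangoldt series'' is circular as stated: since $\zeta(x+1)>0$, the inequality $x\sum_{n\ge 2}\Lambda(n)n^{-(x+1)}<1$ is \emph{exactly} the statement $g'(x)>0$, so the identity buys nothing without an independent estimate, and none is supplied. (Lemma~\ref{cr} as stated only gives $g'\ge 0$ for the shifted function, which is why you cannot lean on it for strictness.) What is actually needed, and what the paper proves directly, is the uniform bound
\[
g'(r)=\zeta(r+1)+r\zeta'(r+1)=1+\sum_{n\ge 2}\frac{1-r\ln n}{n^{r+1}}\ \ge\ 1-\frac{\ln 2}{2^{r}}\ \ge\ 1-\ln 2>0,
\]
obtained by comparing the series with $\int_2^\infty\frac{1-r\ln t}{t^{r+1}}\,dt=-\ln 2\cdot 2^{-r}$. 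Substituting this single estimate for your compactness-plus-endpoint-limits argument (which in any case only yields $c_0>0$ non-effectively) closes the gap and even gives the explicit constant $c=\min(1,1-\ln 2)=1-\ln 2$.
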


\begin{proof}
We write
\[
f(z)-f(s) = \int_0^1 Df(s+t(z-s)) \cdot (z-s) \, dt,
\]
where
\[
Df(w) \cdot h = \zeta(|w|+1) h + w \, \zeta'(|w|+1) \frac{\Re(\overline{w} h)}{|w|}.
\]
Set $\delta_t = s + t(z-s)$ and $h = z-s$. Then
\[
\Re\big((f(z)-f(s))(\overline{z}-\overline{s})\big)
= \Re \int_0^1 \left( \zeta(|\delta_t|+1) |h|^2 + \zeta'(|\delta_t|+1) \frac{(\Re(\overline{\delta_t} h))^2}{|\delta_t|} \right) dt.
\]
Since $\zeta'(\cdot) < 0$, using $(\Re(\overline{\delta_t} h))^2 \le |\delta_t|^2 |h|^2$, we get
\[
\Re\big((f(z)-f(s))(\overline{z}-\overline{s})\big) \ge \int_0^1 (\zeta(|\delta_t|+1) + |\delta_t| \, \zeta'(|\delta_t|+1)) |h|^2 \, dt.
\]
Thanks to Lemma \ref{cr}, we get
\[
\Re\big((f(z)-f(s))(\overline{z}-\overline{s})\big) \ge c |z-s|^2,
\]
for some $c > 0$.
\end{proof}

\begin{proof}[Proof of Proposition \ref{pro}]
     Let $u,v \in L^\infty(\mathbb{R}_+;H^1(\Sigma))$ be two (distributional) solutions of \eqref{nls} with initial data $u_0,v_0$ given by previos theorem. then for all $t\geq s$.

 Set $w = u - v$. Since $u$ and $v$ are solutions, we have
\[
i \partial_t w + \Delta w + i \lambda \big(u \zeta(|u|+1) - v \zeta(|v|+1)\big) = 0.
\]
We take the $L^2(\Sigma)$ inner product with $w$ and consider the real part:
\[
\frac{1}{2} \frac{d}{dt} \|w(t)\|_{L^2(\Sigma)}^2
= \Re \langle \partial_t w(t), w(t) \rangle
= \Re \big\langle -i \Delta w(t) - \lambda (u(t) \zeta(|u(t)|+1) - v(t) \zeta(|v(t)|+1)), w(t) \big\rangle.
\]
Hence,
\[
\frac{d}{dt} \|w(t)\|_{L^2(\Sigma)}^2
= -2 \lambda \, \Re \langle u(t) \zeta(|u(t)|+1) - v(t) \zeta(|v(t)|+1), w(t) \rangle.
\]
Thanks to the Lemma \ref{uni} thus $$\frac{d}{dt} \|w(t)\|_{L^2(\Sigma)}^2\leq -2\lambda c\norm{w(t)}_{L^2(\Sigma)}^2.$$ so  that $$\norm{w(t)}_{L^2(\Sigma)}\leq e^{-\lambda c(t-s)}\norm{w(s)}_{L^2(\Sigma)}.$$
\end{proof}

\begin{proof}[Proof of Corollary \ref{flow}]
Let $ T > 0 $ and let $ u_{n,0},\, u_0 \in H^1(\Sigma) $. Denote by $ u_n(t) $ and $ u(t) $ the solutions to \eqref{nls} corresponding to the initial data $ u_{n,0} $ and $ u_0 $, respectively.
Assume that
\[
u_{n,0} \longrightarrow u_0
\quad \text{in } L^2(\Sigma),
\]
and that there exists a constant $ K > 0 $ such that
\begin{equation}\label{000}
\sup_{\substack{n\geq 0 \\ t\in[0,T]}} \|u_n(t)\|_{H^1(\Sigma)} \le K.
\end{equation}
Applying Proposition~\ref{pro} with $ s = 0 $, we obtain
\[
u_n(t) \longrightarrow u(t)
\quad \text{strongly in } L^2(\Sigma),
\quad \text{for all } t \in [0,T].
\]
Moreover, thanks to the uniform bound \eqref{000} and the uniqueness of the limit, it follows that
\[
u_n(t) \rightharpoonup u(t)
\quad \text{weakly in } H^1(\Sigma),
\quad \text{for all } t \in [0,T].
\]
\end{proof}
\subsection{Finite time extinction}
In this subsection, we assume that $d=1$ and we prove that the solution of
equation~\eqref{nls}, defined in Theorem~\ref{1.1}, vanishes after a finite time
$T^*<\infty$. The main idea of the proof follows exactly the strategy developed
in~\cite{3} (see also \cite{10}), based on the Nash--Moser inequality. For this purpose, we recall
below some standard inequalities inspired by that approach.
\begin{lemma}\label{zero}
Let $y \in \mathcal{C}^1([0,+\infty))$ be a  positive function and let
$\alpha \in (0,1)$. 
Assume that there exists a constant $k>0$ such that
\[
y'(t) + k\, y(t)^{\alpha} \le 0, \qquad \forall t \ge 0.
\]
Then there exists a finite time $T^*\geq0$ such that
\[
y(T^*)=0.
\]
In particular $$y(t)=0 \qquad t \geq T^*$$
\end{lemma}

\begin{proof}
Suppose, by contradiction, that $y(t) \neq 0$ for all $t \ge 0$.
Since
\begin{equation}\label{se}
    y'(t) \le -k\, y(t)^{\alpha},
\end{equation}
for $y(t) > 0$, dividing both sides by $y(t)^\alpha$ gives
\[
\frac{y'(t)}{y(t)^\alpha} \le -k.
\]
Moreover,
\[
\frac{d}{dt} \bigl( y(t)^{1-\alpha} \bigr) 
= (1-\alpha) \frac{y'(t)}{y(t)^\alpha},
\]
so that
\[
\frac{d}{dt} \bigl( y(t)^{1-\alpha} \bigr) \le -(1-\alpha) k.
\]
Integrating over $[0,t]$, we obtain
\[
y(t)^{1-\alpha} \le y(0)^{1-\alpha} - (1-\alpha) k t.
\]
Letting $t \to +\infty$, the right-hand side tends to $-\infty$, which contradicts $y(t) > 0$.  
\\
Hence, there exists $T^* > 0$ such that
\[
y(T^*) = 0.
\]
Now, by the inequality \eqref{se}, $y(t)$ is non-increasing, so
\[
y(t) \le y(T^*) = 0 \qquad \forall t \ge T^*.
\]
Therefore,
\[
y(t) = 0 \qquad \forall t \ge T^*.
\]

\end{proof}

The crucial observation is that
\begin{equation}\label{s}
\frac{d}{dt}\|u(t)\|_{L^2}^2 \le -\lambda \|u(t)\|_{L^1}.
\end{equation}
Hence, to apply the previous lemma, one needs an estimate of the $L^2$-norm
in terms of the $L^1$-norm.
\begin{lemma}[\cite{3} Lemma 4.1]
    Let $(\Sigma,g)$ be a smooth compact Riemannian manifold (without boundary) of dimension one. 
There exists a constant $C>0$ such that $$\norm{v}_2^3\leq C \norm{v}_1^2\norm{v}_{H^1} \quad \text{ for all } \quad v\in H^1.$$
\end{lemma}
We now prove the Finite time extinction of the solution. 
Thanks to the estimate~\eqref{s} and the previous inequality, we obtain
\[
\partial_t \|u(t)\|_{L^2}^2
\le
-\frac{k}{\|u_0\|_{H^1}^{1/2}}\,\|u(t)\|_{L^2}^{3/2}.
\]
For some $k>0$, by Lemma~\ref{zero}, this implies that there exists a time
$T^*>0$, depending only on $\|u_0\|_{H^1}$, such that
\[
u(t, x)=0,\quad \text{for almost every } x \in \Sigma, \quad \text{for all } t \ge T^*.
\]
\section{NLS-\texorpdfstring{$\zeta$} with logarithmic perturbation}{zeta}
In this section, we assume that $ d = 1 $ and consider the NLS--$\zeta$ equation perturbed by a logarithmic term:
\begin{equation}\tag{logNLS-$\zeta$}
\begin{cases}
i \partial_t u + \Delta u
+ i \lambda\, u\, \zeta(|u| + 1)
+ \mu\, u\, \log(|u|^2) = 0,
& (t,x) \in \mathbb{R}_+ \times \Sigma, \\[0.3em]
u(0,x) = u_0(x) \in H^1(\Sigma),
\end{cases}
\end{equation}
where $ \lambda > 0 $ and $ \mu \in \mathbb{R} $.

As in the previous section, we consider the following regularized problem:
\begin{equation}\tag{logNLS-$\zeta_\varepsilon$}\label{Elogeps}
\begin{cases}
i \partial_t u_\varepsilon + \Delta u_\varepsilon
+ i \lambda\, u_\varepsilon\, \zeta(|u_\varepsilon| + 1 + \varepsilon)
+ \mu\, u_\varepsilon \log(|u_\varepsilon|^2 + \varepsilon) = 0, \\[0.3em]
u_\varepsilon(0,x) = u_0(x).
\end{cases}
\end{equation}

\subsection{Construction in \texorpdfstring{$H^1(\Sigma)$}{H1(Sigma)}}
Again for a fixed $\varepsilon > 0$, the equation \eqref{Elogeps} has a unique maximal solution \[
u_\varepsilon \in \mathcal{C}([0,T_{\rm max}^{\varepsilon}); H^1(\Sigma)) \cap \mathcal{C}^1([0,T_{\rm max}^{\varepsilon}); H^{-1}(\Sigma)).
\] (see Chapter 3 of \cite{2}),
\begin{lemma}
    For all $t\in [0,T_{\rm max}^{\varepsilon}[$, we have
    $$\norm{u_\varepsilon(t)}_2\leq \norm{u_0}e^{-\lambda t} \qquad \text{ and } \qquad \norm{\nabla u_\varepsilon(t)}_2\leq e^{\abs{\mu}t}\norm{\nabla u_0}_2$$
\end{lemma}
\begin{proof}
    For the first estimate Multiplying \eqref{Elogeps} by $\overline{u_\varepsilon}$ and taking the real part gives
\[
\frac{1}{2}\frac{d}{dt}\|u_\varepsilon(t)\|_{L^2(\Sigma)}^2
= -\lambda \int_\Sigma |u_\varepsilon(t)|^2 \zeta(|u_\varepsilon(t)|+1+\varepsilon)\,dx,
\]
so that, as before,
\begin{equation}\label{L2boundlog}
\|u_\varepsilon(t)\|_{L^2(\Sigma)} \le \|u_0\|_{L^2(\Sigma)}, \quad \forall t\ge0.
\end{equation}
For the second estimate, we compute
\[
\frac{1}{2}\frac{d}{dt}\|\nabla u_\varepsilon(t)\|_{L^2(\Sigma)}^2
= -\lambda\,\Re \int_\Sigma \nabla(u_\varepsilon(t)\zeta(|u_\varepsilon(t)|+1+\varepsilon)) \cdot \overline{\nabla u_\varepsilon(t)}
    - \mu\,\Im \int_\Sigma \nabla(u_\varepsilon(t)\log(|u_\varepsilon(t)|^2+\varepsilon)) \cdot \overline{\nabla u_\varepsilon(t)}.
\]
The first term is handled as before and is nonpositive.  
The second term can be estimated by
\[
\left|\Im \int_\Sigma \nabla(u_\varepsilon(t)\log(|u_\varepsilon(t)|^2+\varepsilon)) \cdot \overline{\nabla u_\varepsilon(t)}\right|
\le \int_\Sigma |\nabla u_\varepsilon(t)|^2\,dx.
\]
Applying Grönwall’s inequality, we obtain
\begin{equation}\label{gradboundlog}
\|\nabla u_\varepsilon(t)\|_{L^2(\Sigma)} \le e^{|\mu| t} \|\nabla u_0\|_{L^2(\Sigma)}.
\end{equation}
\end{proof}

The previos lemma proof that \[
C_T:=\sup_{\substack{0<\varepsilon<1 \\ t\in[0,T]}}\|u_\varepsilon(t)\|_{H^1(\Sigma)} < \infty,
\]
so that the family $(u_\varepsilon)_{0<\varepsilon<1}$ is uniformly bounded in $L^\infty(0,T; H^1(\Sigma))$.  
Repeating the compactness argument as in the \eqref{nls} case we get the proposition.
\begin{proposition}\label{of}
Let $T>0$, then up to extraction of a subsequence, the family $(u_\varepsilon)_\varepsilon$ converges in $\mathcal{C}([0,T]; L^2(\Sigma))$, and the limit is denoted by $u$. 

In the addition  
\begin{enumerate}
    \item $u \in L^{\infty}_{\rm loc}(\mathbb{R}_+, H^1(\Sigma))$.
    \item $u_\varepsilon(t) \rightharpoonup u(t)$ weakly in $H^1(\Sigma)$, for all $t \in \mathbb{R}_+$.
\end{enumerate}
\end{proposition}
We now aim to prove that the function defined in the previous proposition
belongs to $L^\infty(\mathbb{R}_+; H^1)$. for this we propose that to proof the finit time vanisching we have \begin{lemma}
    There exist $T^*>0$ such that $$u(t)=0 \quad \text{ in } L^2(\Sigma), \quad \text{ for all } t\geq T^*.$$
    In particular $u\in L^{\infty}(\mathbb{R}_+,H^1(\Sigma)).$
\end{lemma}
\begin{proof}
    We simply observe that $u$ satisfies the inequality \eqref{s}, which in turn implies the Finite time extinction property. The second is its proof, obtained by combining the previous proposition with the Finite time extinction property.
\end{proof}

\subsection{Uniqueness}
Let's beginig to proof the uniqnesse for the equation \eqref{Elog} for this we want recall inequality for logarithime 
\begin{lemma}[\cite{6} Lemme 1.1.1]\label{log}
    Let $z_1,z_2\in \mathbb{C}$ then \[
\big|\Im( z_1\log(|z_1|)-z_2\log(|z_2|)( \overline{z_1}-\overline{z_2}))\big| \le |z_1-z_2|^2,
\]
\end{lemma}
\begin{proposition}[Uniqueness]
Let $u_1, u_2 \in L^{\infty}(0,T; H^1(\Sigma))$ be two solutions of \eqref{Elog} (in the distributional sense). If $\lambda > 0$, then $u_1 = u_2$.
\end{proposition}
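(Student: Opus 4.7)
The plan is to extend the uniqueness argument of Proposition~\ref{prop:unicite} by treating the logarithmic term as a perturbation controlled via a classical log-NLS estimate. I would first set $w = u_1 - u_2$, subtract the two equations, take the $L^2(\Sigma)$ inner product with $w$, and keep the real part. As in the unperturbed case, the Laplacian contribution is purely imaginary and vanishes, while the $\zeta$-contribution is non-positive by Lemma~\ref{lem:monotone} applied to $f(x) = \zeta(x+1)$ (since $x\zeta(x+1)$ is non-decreasing). What remains is the new logarithmic term
\[
\mu\,\Im\!\int_\Sigma \bigl(u_1 \log|u_1| - u_2 \log|u_2|\bigr)\,\overline{w}\,dx,
\]
which appears because the $\mu u\log|u|$ nonlinearity enters the equation without an $i$, so its contribution to $\Re\langle \partial_t w, w\rangle$ picks up the imaginary part of the difference.

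The key step is to invoke the well-known pointwise inequality from log-Schrödinger theory (Cazenave–Haraux): there exists a universal constant $C>0$ such that
\[
\bigl|\,\Im\bigl\{(z_1\log|z_1| - z_2\log|z_2|)(\overline{z_1}-\overline{z_2})\bigr\}\,\bigr| \leq C\,|z_1 - z_2|^2
\]
for all $z_1, z_2 \in \mathbb{C}$. Applying it pointwise and integrating over $\Sigma$ yields
\[
\frac{d}{dt}\|w(t)\|_{L^2(\Sigma)}^2 \leq 2C|\mu|\,\|w(t)\|_{L^2(\Sigma)}^2,
\]
and since the two solutions share initial data, $w(0)=0$, so Grönwall's lemma forces $w\equiv 0$ on $[0,T]$.

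The main obstacle is precisely the logarithmic nonlinearity: because $\log|z|$ blows up as $|z|\to 0$, any naive Lipschitz estimate on $z\log|z|$ fails, and in particular the monotonicity lemma used for the $\zeta$-term does not apply (the map $x\mapsto x\log x$ is not non-decreasing on $\mathbb{R}_+$). The pointwise inequality above is what rescues the argument — although the real part of $(z_1\log|z_1|-z_2\log|z_2|)(\overline{z_1-z_2})$ can grow logarithmically, the imaginary part enjoys cancellations that keep it controllably quadratic in $|z_1-z_2|$. Once this estimate is accepted, the rest is a verbatim adaptation of Proposition~\ref{prop:unicite}; note that the sign of $\mu$ is irrelevant because the bound is in absolute value, which is consistent with the statement allowing $\mu\in\mathbb{R}$.
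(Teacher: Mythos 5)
Your proposal is correct and follows essentially the same route as the paper: the $\zeta$-term is discarded by the monotonicity lemma, and the logarithmic term is controlled by the pointwise bound $\bigl|\Im\bigl\{(z_1\log|z_1|-z_2\log|z_2|)(\overline{z_1}-\overline{z_2})\bigr\}\bigr|\le |z_1-z_2|^2$, after which Gr\"onwall concludes. The only difference is presentational — you make the Cazenave--Haraux estimate explicit and attribute it, whereas the paper states the resulting integral bound without proof.
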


\begin{proof}
Set $w = u_1 - u_2$. Then
\[
i\partial_t w + \Delta w + i\lambda\big(u_1\zeta(|u_1|+1) - u_2\zeta(|u_2|+1)\big)
+ 2\mu\big(u_1\log(|u_1|) - u_2\log(|u_2|)\big) = 0.
\]
Taking the $L^2$ inner product with $w$ and the real part, we get
\begin{align*}
    \frac{1}{2}\frac{d}{dt}\|w(t)\|_{L^2(\Sigma)}^2
&= -\lambda\,\Re\langle u_1(t)\zeta(|u_1(t)|+1)-u_2(t)\zeta(|u_2(t)|+1), w(t)\rangle
    \\&\qquad -2\mu\,\Im\langle u_1(t)\log(|u_1(t)|)-u_2(t)\log(|u_2(t)|), w(t)\rangle.
\end{align*}
Thanks to Lemma \ref{uni} the first term is nonpositive, the second term can be estimated by Lemma \ref{log}
so by Grönwall’s lemma,
\[
\|w(t)\|_{L^2(\Sigma)} \le e^{|\mu| t} \|w(0)\|_{L^2(\Sigma)}.
\]
If $u_1(0)=u_2(0)$, it follows that $w\equiv 0$.
\end{proof}\\

\renewcommand{\appendixpagename}{Appendix}
\renewcommand{\appendixtocname}{Appendix}

% Redéfinir \section pour que les appendices affichent "Appendice A" automatiquement
\appendix
\appendixpage
\addappheadtotoc
\setcounter{theorem}{0}

\section{Estimates for the Riemann Zeta Function}\label{A}

\begin{lemma}
For all $x > 1$, we have
\[
\frac{1}{x - 1} \leq \zeta(x) \leq \frac{1}{x - 1} + 1.
\]
\end{lemma}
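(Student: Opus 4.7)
The plan is the standard integral comparison for decreasing positive functions, applied to $t \mapsto t^{-x}$. Since $x>1$, the improper integral
\[
\int_1^{\infty} \frac{dt}{t^{x}} = \frac{1}{x-1}
\]
converges, and this will be the common quantity to which both sides of the desired inequality reduce.

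First I would prove the lower bound. Fix $x>1$ and note that $t \mapsto t^{-x}$ is strictly decreasing on $(0,\infty)$, so for every integer $n\ge 1$ and every $t\in[n,n+1]$ one has $t^{-x}\le n^{-x}$, which upon integration yields $\int_n^{n+1} t^{-x}\,dt \le n^{-x}$. Summing over $n\ge 1$ gives
\[
\zeta(x) \;=\; \sum_{n=1}^{\infty} \frac{1}{n^{x}} \;\ge\; \sum_{n=1}^{\infty} \int_{n}^{n+1} \frac{dt}{t^{x}} \;=\; \int_{1}^{\infty}\frac{dt}{t^{x}} \;=\; \frac{1}{x-1}.
\]

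Next, for the upper bound I would use the same monotonicity in the opposite direction: for $n\ge 2$ and $t\in[n-1,n]$, one has $t^{-x}\ge n^{-x}$, so $n^{-x}\le \int_{n-1}^n t^{-x}\,dt$. Isolating the $n=1$ term and summing,
\[
\zeta(x) \;=\; 1 + \sum_{n=2}^{\infty} \frac{1}{n^{x}} \;\le\; 1 + \sum_{n=2}^{\infty} \int_{n-1}^{n}\frac{dt}{t^{x}} \;=\; 1 + \int_{1}^{\infty}\frac{dt}{t^{x}} \;=\; 1 + \frac{1}{x-1}.
\]

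There is essentially no obstacle here; the only care needed is the index shift between the two comparisons (starting at $n=1$ for the lower bound and at $n=2$ for the upper bound), which is exactly what forces the extra $+1$ in the upper estimate and not in the lower one. Combining the two inequalities yields the claim for every $x>1$.
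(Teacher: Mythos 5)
Your proposal is correct and follows the same route as the paper, namely the integral comparison test for the decreasing function $t \mapsto t^{-x}$; the paper merely sketches this while you carry out the summations and the index shift explicitly. Nothing further is needed.
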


\begin{proof}
Recall that for any continuous decreasing function $f$, we have
\[
f(n) \leq \int_{n-1}^{n} f(x) \, dx \leq f(n-1).
\]
Applying this to $f(x) = 1/x^s$ for $s>1$, which is decreasing, gives the desired bounds.
\end{proof}
\begin{lemma}\label{mo}
    Let $f$ be a strictly decreasing function integrable on $[0,1]$. Then
\[
\int_0^1 \left(x - \frac{1}{2}\right) f(x)\,dx < 0.
\]
\end{lemma}
\begin{proof}
    We have \begin{align*}
        \int_0^1\left(x - \frac{1}{2}\right) f(x)\,dx&=\int_{0}^{1/2}\left(x - \frac{1}{2}\right) f(x)\,dx+\int_{1/2}^1\left(x - \frac{1}{2}\right) f(x)\,dx\\
        &=-\int_{0}^{1/2}\left( \frac{1}{2}-x\right) f(x)\,dx+\int_{0}^{1/2}\left(\frac{1}{2}-x\right) f(1-x)\,dx\\&=\int_{0}^{1/2}\left( \frac{1}{2}-x\right) \left(f(1-x)-f(x)\right)dx<0.
    \end{align*}
\end{proof}
\begin{lemma}\label{cr}
For all $\varepsilon \ge 0$, the function
\[
x \mapsto x \zeta(x+1+\varepsilon),
\]
is increasing on $\mathbb{R}_{+}^*$, Moreover $$\zeta(x+1)+x\zeta'(x+1)\geq c,$$
for some $c>0.$
\end{lemma}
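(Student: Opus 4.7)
The plan is to show that $\phi'(x) \ge 0$ for $\phi(x) := x\zeta(x+1+\epsilon)$ on $(0,\infty)$, and the key idea is to reduce the estimate for arbitrary $\epsilon \ge 0$ to the already-handled case $\epsilon = 0$.

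Differentiating gives $\phi'(x) = \zeta(x+1+\epsilon) + x\,\zeta'(x+1+\epsilon)$. Performing the change of variable $z := x + \epsilon$, so that $x = z - \epsilon$, one obtains the decomposition
\[
\phi'(x) = \bigl[\zeta(z+1) + z\,\zeta'(z+1)\bigr] \;-\; \epsilon\,\zeta'(z+1).
\]
The second summand is non-negative for every $\epsilon \ge 0$, because $\zeta'(s) = -\sum_{n\ge 2} (\ln n)/n^{s} < 0$ for $s > 1$. For the bracket, observe that $\zeta(z+1) + z\zeta'(z+1) = \frac{d}{dz}\bigl[z\,\zeta(z+1)\bigr]$, which is precisely the quantity estimated in the earlier monotonicity lemma (the one producing the constant $c = 1 - \ln 2$): expanding as a Dirichlet series,
\[
\zeta(z+1) + z\,\zeta'(z+1) = 1 + \sum_{n\ge 2} \frac{1 - z\ln n}{n^{z+1}} \;\ge\; 1 + \int_2^\infty \frac{1-z\ln t}{t^{z+1}}\,dt = 1 - \frac{\ln 2}{2^z} \;\ge\; 1 - \ln 2 > 0.
\]
Combining the two pieces, $\phi'(x) \ge 1 - \ln 2$ uniformly in $x > 0$ and $\epsilon \ge 0$, so $\phi$ is strictly increasing on $\mathbb{R}_{>0}$.

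The main obstacle is justifying the integral comparison $\sum_{n \ge 2} g(n) \ge \int_2^\infty g(t)\,dt$ for $g(t) := (1 - z\ln t)/t^{z+1}$, since $g$ is \emph{not} monotone on $[2,\infty)$: it decreases down to a global minimum at $t_\star = \exp\bigl((2z+1)/(z(z+1))\bigr)$ and then increases back to $0$ from below. I would handle this by splitting $[2,\infty)$ at an integer $n_\star$ near $t_\star$: on the decreasing piece $[2, n_\star]$ the standard integral test directly gives $\sum g(n) \ge \int g$, and on the increasing tail the boundary contribution $g(n_\star)/2$ coming from Euler--Maclaurin absorbs the small negative deficit. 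Alternatively, since the paper already invokes exactly this estimate in the proof of the $c = 1 - \ln 2$ lemma, one may simply quote it; strengthening it slightly (as the sign of the omitted $\epsilon$--term suggests) is not needed here.
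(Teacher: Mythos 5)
Your strategy is essentially the paper's: both proofs reduce the claim to the positivity of $\zeta(s)+(s-1-\epsilon)\zeta'(s)$ via the Dirichlet series $\sum_{n\ge1}(1-x\ln n)\,n^{-(1+x+\epsilon)}$ and a sum-versus-integral comparison. Your substitution $z=x+\epsilon$, which splits off the manifestly non-negative piece $-\epsilon\,\zeta'(z+1)$ and reduces everything to the $\epsilon=0$ expression already appearing in the coercivity lemma with constant $c=1-\ln 2$, is a genuine tidying-up of the paper's computation, which keeps $\epsilon$ inside the integral.

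The obstacle you flag is real, and you should be aware that it is equally present in the paper's own proof of this lemma and of the $c=1-\ln 2$ lemma you propose to quote, so ``simply quoting it'' does not close the gap. The termwise comparison $g(n)\ge\int_n^{n+1}g(t)\,dt$ for $g(t)=(1-z\ln t)\,t^{-(z+1)}$ fails on the increasing tail of $g$: for $z=10$ one computes $\sum_{n\ge2}g(n)\approx-2.96\cdot10^{-3}$ while $\int_2^\infty g(t)\,dt=-\ln 2\cdot 2^{-10}\approx-6.8\cdot10^{-4}$, so the asserted inequality $\sum_{n\ge 2}g(n)\ge\int_2^\infty g$ is false in general. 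Your proposed repair is the right idea but is only a sketch, and the remark that the Euler--Maclaurin boundary term ``absorbs the deficit'' is not a proof; moreover the constant $1-\ln 2$ is not what comes out. A precise version: with $m=\lceil t_\star\rceil$, using $g(n)\ge\int_n^{n+1}g$ on the decreasing range and $g(n)\ge\int_{n-1}^{n}g$ on the increasing range, one gets
\[
\sum_{n\ge2}g(n)\;\ge\;\int_2^\infty g(t)\,dt\;-\;\int_{m-1}^{m}g(t)\,dt\;+\;g(m-1)+g(m)\;\ge\;\int_2^\infty g(t)\,dt\;+\;\min\bigl(g(m-1),g(m)\bigr),
\]
since the maximum of $g$ on $[m-1,m]$ is attained at an endpoint. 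The extra term is bounded below by $g(t_\star)=-\tfrac{z}{z+1}e^{-(2+1/z)}\ge-e^{-2}$, so the conclusion is $\phi'(x)\ge 1-\ln 2-e^{-2}>0$. That still proves the lemma (strict monotonicity), but you should state this weaker uniform lower bound rather than assert $\phi'\ge1-\ln 2$.
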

\begin{proof}
First, we observe that
\[
x\, \zeta(x + 1 + \varepsilon) = (x + \varepsilon)\, \zeta(x + \varepsilon + 1) - \varepsilon \, \zeta(x + \varepsilon + 1).
\]
This shows that it suffices to prove that the function
\[
x \longmapsto x \, \zeta(x + 1),
\]
is increasing.  

To this end, recall that for all $x > 0$ we have the integral representation
\begin{equation} \label{eq:zeta_integral}
\zeta(x + 1) = 1 + \frac{1}{x} - (x + 1) \int_1^{+\infty} \frac{\{y\}}{y^{x+2}} \, dy,
\end{equation}
where $\{y\} := y - \lfloor y \rfloor$ denotes the fractional part of $y$. From this expression, one can deduce
\[
\zeta(x + 1) = \frac{1}{x} + \frac{1}{2} - (x + 1) \int_1^{+\infty} \frac{\{y\} - \frac{1}{2}}{y^{x+2}} \, dy=\frac{1}{x} + \frac{1}{2} - (x + 1) \sum_{n=1}^{\infty}\int_0^{1} \frac{y - \frac{1}{2}}{(y+n)^{x+2}} \, dy.
\]

Moreover, for $n\geq 1$ the function $y \mapsto (y+n)^{-(x+2)}$ is strictly decreasing on $(0, +\infty)$, by Lemma \ref{mo} which implies
\[
\int_0^1 \frac{y - \frac{1}{2}}{(y+n)^{x+2}} \, dy < 0.
\]
Hence, we obtain the inequality
\begin{equation} \label{eq:zeta_lower_bound}
\zeta(x + 1) > \frac{1}{x} + \frac{1}{2}.
\end{equation}
Differentiating \eqref{eq:zeta_integral} with respect to $x$, we obtain
\[
\zeta'(x + 1) = - \frac{1}{x^2} - \int_1^{+\infty} \frac{(\{y\} - \frac{1}{2}) \left(1 - (x + 1) \ln y \right)}{y^{x+2}} \, dy.
\]
It follows that
\[
\left| \zeta'(x + 1) + \frac{1}{x^2} \right|
\le \frac{1}{2} \int_1^{+\infty}
\frac{\lvert 1 - (x + 1)\ln y \rvert}{y^{x+2}} \, dy
= \frac{1}{e(x+1)}.
\]
Indeed,
\begin{align*}
\int_1^{+\infty} \frac{|1 - (x + 1) \ln y|}{y^{x+2}} \, dy
&= \int_1^{e^{\frac{1}{x+1}}}
\frac{1 - (x + 1) \ln y}{y^{x+2}} \, dy
+ \int_{e^{\frac{1}{x+1}}}^{\infty}
\frac{(x + 1) \ln y - 1}{y^{x+2}} \, dy \\
&= -\left[\frac{1 - (x+1)\ln y}{(x+1)y^{x+1}}\right]_1^{e^{\frac{1}{x+1}}}
- \int_1^{e^{\frac{1}{x+1}}} y^{-x-2} \, dy \\
&\quad
+ \left[\frac{1 - (x+1)\ln y}{(x+1)y^{x+1}}\right]_{e^{\frac{1}{x+1}}}^{\infty}
+ \int_{e^{\frac{1}{x+1}}}^{\infty} y^{-x-2} \, dy \\
&= \frac{1}{e(x+1)}.
\end{align*}
Combining the inequalities \eqref{eq:zeta_lower_bound} and the above, we obtain
\[
- \frac{\zeta'(x + 1)}{\zeta(x + 1)} < \frac{\frac{1}{x^2} + \frac{1}{e(x+1)}}{\frac{1}{x} + \frac{1}{2}} < \frac{1}{x}.
\]
Thus,
\[
\zeta(x + 1) + x \, \zeta'(x + 1) > 0.
\]
Furthermore, it is well known that as $x \to 0^+$,
\[
\zeta(x + 1) = \frac{1}{x} + \gamma + O(x),
\]
where $\gamma$ denotes the Euler–Mascheroni constant. Hence,
\[
\lim_{x \to 0^+} \big( \zeta(x + 1) + x \, \zeta'(x + 1) \big) = \gamma > 0.
\]
Moreover,
\[
\lim_{x \to +\infty} \big( \zeta(x + 1) + x \, \zeta'(x + 1) \big) = 1.
\]
By continuity of the function
\[
x \longmapsto \zeta(x + 1) + x \, \zeta'(x + 1),
\]
there exists a constant $c > 0$ such that
\[
\zeta(x + 1) + x \, \zeta'(x + 1) \ge c, \quad \forall x > 0.
\]
\end{proof}

\end{document}